\documentclass[a4paper,12pt]{article}

\usepackage[T1]{fontenc}
\usepackage{enumerate, comment}
\usepackage{amsmath,amssymb,amsthm}
\usepackage{mathtools}
\usepackage{graphicx}
\usepackage{bbm}
\usepackage{dsfont}
\usepackage{cite, color}   

\definecolor{Red}{cmyk}{0,1,1,0}

\definecolor{verde}{cmyk}{1,0,1,0}

\definecolor{azul}{cmyk}{1,1,0,0}

\usepackage[hidelinks,bookmarksnumbered]{hyperref}
\hypersetup{pdfstartview=FitH}

\usepackage{tikz}
\usetikzlibrary{patterns}
\usetikzlibrary{calc}
\usetikzlibrary{intersections}
\usetikzlibrary{through}
\usetikzlibrary{backgrounds}

\def\ra{\rightarrow}

\def\Z{\mathbb{Z}}
\def\P{\mathbb{P}}
\def\E{\mathbb{E}}
\def\L{\Lambda}

  \newcounter{constant}

  \setcounter{constant}{-1}

\newcommand{\I}{\mathds{1}}

\newcommand{\cC}{\ensuremath{\mathcal{C}}}
\newcommand{\cD}{\ensuremath{\mathcal{D}}}

\newcommand{\NN}{\ensuremath{\mathbb{N}}}
\newcommand{\PP}{\ensuremath{\mathbb{P}}}
\newcommand{\RR}{\ensuremath{\mathbb{R}}}
\newcommand{\ZZ}{\ensuremath{\mathbb{Z}}}

\newcommand{\C}{\ensuremath{\mathcal{C}}}
\newcommand{\D}{\ensuremath{\mathcal{D}}}

\setlength{\marginparwidth}{60pt}
\setlength{\hoffset}{0pt}
\setlength{\oddsidemargin}{5pt}
\setlength{\evensidemargin}{5pt}
\setlength{\textwidth}{435pt}

\theoremstyle{plain}
\newtheorem{prop}{Proposition}
\newtheorem{teo}{Theorem}
\newtheorem{lema}{Lemma}

\theoremstyle{definition}
\newtheorem{defi}{Definition}

\theoremstyle{remark}
\newtheorem{remark}{Remark}

\newcommand\Osq{\mathbin{\text{\scalebox{.84}{$\square$}}}}

\usepackage{centernot}

\def\ra{\rightarrow}

\def\llr{\longleftrightarrow}
\def\nllr{\centernot\longleftrightarrow}

\providecommand{\keywords}[1]
{
  \small	
  \noindent\textbf{\textit{Keywords---}} #1
}

\title{Percolation with random one-dimensional reinforcements}
\author{A. Nascimento$^{1,*}$, R. Sanchis$^{2,\dagger}$, D. Ungaretti$^{3,\ddag}$}

\begin{document}
\maketitle
\begin{center}
$^*$alanbruno@ufmg.br, $^\dagger$rsanchis@mat.ufmg.br, $^\ddag$daniel@im.ufrj.br\\
    \noindent$^{1,2}${\it Universidade Federal de Minas Gerais}\\
$^3${\it Universidade Federal do Rio de Janeiro}
\end{center}
\begin{abstract}
 We study inhomogeneous Bernoulli bond percolation on the
graph $G \times \ZZ$, where $G$ is a connected quasi-transitive graph.  The inhomogeneity is introduced through a  random region $R$ around the {\it origin axis} $\{0\}\times\ZZ$, where each edge in $R$ is open with probability $q$ and all other edges are open with probability $p$. When the region $R$ is defined by stacking or overlapping boxes with random radii centered along the origin axis, we derive conditions on the moments of the radii, based on the growth properties of $G$, so that for any subcritical $p$ and any $q<1$, the non-percolative phase persists.

\end{abstract}
\textit{Mathematics Subject Classification (2020):} 60K35, 60K37.

\noindent\keywords{Inhomogenous percolation,  random reinforcements.}
\section{Introduction}

Percolation models the spread of a fluid through random medium and it has been object of intensive study since it was introduced in 1957 by Broadbent and Hammersley \cite{hammersley}.  Classical models consider a homogeneous medium representing it as a random graph where edges (or sites) are independently present with probability $p$ or absent otherwise.

The fluid is regarded as the connected component, also called the open cluster, of a fixed point that we call the origin of the graph. With the development of many techniques in the 80's, many questions about inhomogeneous percolation were raised. That is, when some region $R$ of privileged flow is considered in the medium. We now formalize these ideas and give concrete examples of such inhomogeneous models and questions.

We say that a graph $G=(V,E)$ is quasi-transitive if there exists a finite set of sites $V_0\subset V$ such that for every site $w\in V$, there exists $x\in V_0$ and an automorphism $\tau$ of $G$ such that $\tau(y)=x$.   In this work we consider inhomogeneous Bernoulli bond percolation on the cartesian product $G\times \ZZ$ where $G$ is an infinite connected quasi-transitive graph and $\ZZ$ is the set of integers $\ZZ$. The edges of $G\times \ZZ$ are pairs of nearest neighbour's sites and this graph is sometimes called the {\it box product} and denoted $G\Osq \ZZ$. That is,
we consider the graph 
\begin{equation}\label{def:gtimesz}
G\times \ZZ= (V \times \ZZ, E(G \times \ZZ)),
\end{equation}
 where $E(G \times \ZZ)$ is given by the edges which make every layer $V\times \{n\}$ a copy of $G$ and also by the edges connecting the sites corresponding to the same site in $G$ on neighbour layers. More precisely,  if $\sim$  denotes the relation "is connected by an edge to", two sites $(v_1,v_2), (w_1,w_2)\in G\times \ZZ$ are connected by an edge if, and only if, $$v_1=w_1 \text{ and } v_2\sim w_2 \qquad  \text{ or }\qquad v_1\sim w_1 \text{ and } v_2= w_2.$$

We distinguish a vertex in $G$ to be the origin of the graph and we call it $0$ as usual. We also denote by $0$ the origin $(0,0)\in G\times \ZZ$. The set $\{0\}\times \ZZ$ will be called the vertical line or vertical axis along the origin of $G\times \ZZ$. We denote by $B_G^x(r)\subset G$ the set of sites that are up to distance $r$ from $x$, that is
$$
B_G^x(r)= \{v\in G; d_G(x,v)\leq r\}\subset G,
$$
where $d_G$ denotes the graph distance. For simplicity, we will also write $B_G(r)= B_G^0(r)$. We denote the set of integers $[a,b]\cap\ZZ$ within some interval $[a,b]\subset \RR$ simply as $[a,b]\subset \ZZ$.  We denote the box centered at $(x,a)\in G\times\ZZ$ with radius $r$ by 
\begin{equation}\label{eq:Bndef}
B^x(a,r)= B^x_G(r)\times [a-r,a+r]\subset G\times \ZZ
\end{equation}
and we denote for simplicity $B(a,r)= B^0(a,r)$.

 We consider inhomogeneous independent percolation on the graph $G\times\ZZ$.  For a subset $R\subset G\times \ZZ$  let $\P(\text{$e$ is open}) = q$ for edges $e$ in $R$ and $\P(\text{$e$ is open}) = p$ otherwise. Let
us denote the resulting probability measure by $\P^{(R)}_{p,q}$ and its
percolation probability by $\theta^{(R)}(p,q)$, i.e., the probability that there is an infinite open path starting from the origin of $G \times \ZZ$.

The hyphotesis of quasi-transitivity on $G$ is due to some essential properties for our work that such graphs possess. The first one is that quasi-transitive graphs cannot grow too fast. If $\Delta_x$ denotes the degree of $x\in G$ and $\Delta_G=\max_{x\in V_0}\Delta_x$, then, for every $x\in G$,
\begin{equation}\label{eq:expgrowth}
    |B_G^x(n)|\leq \Delta_G^n.
\end{equation}
The second one is that, as it was shown by Antunovi\'{c}, Vaseli\'{c} (see \cite{vaselic2007}), subcritical homogeneous percolation on quasi-transitive graphs always have a sharp threshold, see also Beekenkamp, Hulshof \cite{beekenkamphulshof} for inhomogeneous percolation.  Since $G$ is quasi-transitive, this implies that $G\times\ZZ$ is also quasi-transitive, hence, for any $p< p_c(G\times\ZZ)$, there exists a constant $c=c(p)>0$ such that for every $(x,a)\in G\times\ZZ$,
\begin{equation}\label{eq:expdecay}
    \PP_p((x,a)\llr \partial B^x(a,r))\leq e^{-c(p)r}.
\end{equation}

Our results depend solely on these two properties. While we could present our findings in terms of these properties, we believe that the quasi-transitivity setting is sufficiently general for our purposes.

One important class of problems in inhomogeneous percolation is to consider a parameter $p$ such that $\theta(p)=\theta^{(R)}(p,p)=0$ and a supercritical parameter $q$. We seek to understand the relation between the size and shape of $R$ and how large $q$ must be  to enter the percolating phase of $\P^{(R)}(p,q)$. 

Several positive results are present in the literature when one considers random regions $R$. For instance, Duminil-Copin, Hilário, Kozma, Sidoravicius \cite{hilario} showed that for Brochette Percolation in the square lattice, that is, when $R\subset\ZZ^2$ is  given by the union of vertical lines chosen independently at random, whenever $q>p_c(\ZZ^2)$ we can choose $p<p_c(\ZZ^2)$ such that the origin  percolates. See also \cite{BDS}, \cite{remy},\cite{hoffman}, \cite{KSV} for related problems.

Our work is, though, reminiscent of the following results. In 1994, Madras, Schinazi, and Schonmann \cite{madras} showed that for the case where $G=\ZZ^{d-1}$ and $R=\{0\}\times\ZZ$ is a vertical line, the critical point of the inhomogeneous model remains the same as in the homogeneous case. Their work is actually for the Contact Process, but it translates naturally for percolation. Later in that year, Zhang \cite{zhang} showed that for all $q<1$ we have 

$$\mathbb{P}_{1/2,q}^{\{0\}\times\ZZ }(0\longleftrightarrow\infty)=0,$$
recalling the value of the critical point $p_c(\mathbb{Z}^2)=1/2.$ In other words, phase transition also remains continuous in $\ZZ^2$, which is the case  $d-1=2$ of a problem posed in \cite{madras}. In general,  we can define
\begin{equation}\label{eq:pcqcurve}
q\mapsto p_c(q)= \sup\{p\in[0,1]\,;\, \mathbb{P}_{p,q}^{(R)}(0\longleftrightarrow \infty)=0\}.
\end{equation}

Proposition 1.4 of \cite{madras} shows, in particular, that $p_c(q)$ is a constant curve for $q\in(0,1)$ in $\ZZ^d$ for all $d\geq 2$. In 2020, Szabó and Valesin \cite{valesin} studied this problem for a general graph $G$ and proved that, for any finite subgraph $F\subset G$,  $p_c(q)$ is continuous when $R$ is the cylinder $F\times\ZZ$ in the cartesian product $G\times \ZZ$, which they call a {\it ladder graph}, and they conjecture that the curve is constant. Our results, in particular, imply that this is actually the case when $G$ is quasi-transitive. Also in 2020, Lima and Sanna \cite{bnbl} generalized the result of \cite{valesin} by replacing $F\times \ZZ$ by a region $R$ given by the union of an infinite number of {\it well spaced} cylinders with uniformly bounded radii.

Our objective is to extend beyond the deterministic setting, where the region $R$ is fixed, and investigate models with random thickness in the reinforced one-dimensional region, preventing it from being confined to a deterministic cylinder. We do that in two ways. Firstly, we consider the region $R$ to be the union of boxes centered along the line $\{0\}\times \ZZ$ having radii given by i.i.d. random variables, this is the Overlap Model defined in Subsection \ref{subsection:overlap}. Then we consider the region $R$ given by stacked boxes with i.i.d. radii also centered along the line $\{0\}\times \ZZ$, this is the Stack Model defined in Subsection  \ref{subsection:stack}. In both cases we prove that, under mild conditions on the expectation of the radii, for any $p<p_c(G\times\ZZ)$ and for any $q<1$, the resulting process remains, for almost every environment, in the non-percolating phase.

\section{Definition of the models and statement of the results}

Consider a collection $\{ X_n; n \in \ZZ\}$  of iid  random variables supported
on $\NN=\{1,2,3,\dots\}$ in the probability space $(\Xi,\mathcal{G}, \nu)$, where $\Xi=\NN^{\ZZ}$ and $\mathcal{G}=\sigma(X_n, n\in\ZZ)$, with associated expectation operator denoted by $E$.  One might assume that the random variables $X_n$ are supported on $\mathbb{R}_+$, however, this assumption would not introduce any new phenomena  to our results. A configuration $\Lambda\in\Xi$ will be called an environment.
We start with a classical result of de la Valée Poussin on uniform integrability applied to a single function.

\begin{lema}
\label{lema:improved_moment}
Let $X$ be a random variable supported on $\NN$. If $\E X<\infty$, then there exists a non-decreasing function $f : \RR \to[0,\infty)$ with $f(x) \nearrow \infty$ as $x \to \infty$, such that 
$$
\E[ Xf(X)]<\infty.
$$
Moreover, if $g$ is the inverse function of $x f(x)$.
Then, we have
$$
\lim_{n\ra \infty} \frac{g(n)}{n}=0.
$$
\end{lema}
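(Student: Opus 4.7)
My plan is to obtain $f$ via the classical de la Vallée Poussin construction (as the citation in the statement suggests) and then read off the decay of $g(n)/n$ directly from the defining relation $g(xf(x))=x$. The substance of the argument sits in the first claim; the ``moreover'' part then reduces to a one-line manipulation.

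For the existence of $f$, one clean route is to invoke de la Vallée Poussin in its single-function form: since $\E X<\infty$, the singleton $\{X\}$ is trivially uniformly integrable, so there is a non-decreasing convex $\Phi:[0,\infty)\to[0,\infty)$ with $\Phi(0)=0$, $\Phi(x)/x\nearrow\infty$ and $\E[\Phi(X)]<\infty$; then $f(x):=\Phi(x)/x$ (extended by $0$ at $x=0$ and on $(-\infty,0)$) is non-decreasing, because convexity together with $\Phi(0)=0$ forces the secant slope from the origin to be non-decreasing. Alternatively, I can build $f$ by hand: letting $p_k:=\P(X=k)$ and $b_n:=\sum_{k\geq n}k\,p_k$, which tends to $0$ by $\E X<\infty$, I pick an increasing sequence $0=n_0<n_1<n_2<\cdots$ with $b_{n_j}\leq 2^{-j}$ and take $f$ to be the continuous piecewise linear interpolant of $f(n_j)=j$ (with $f(x)=0$ for $x\leq 0$). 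Then $f$ is non-decreasing with $f(x)\nearrow\infty$, and a dyadic regrouping combined with Abel summation yields
\[
\E[Xf(X)]\leq \sum_{j\geq 0}(j+1)\bigl(b_{n_j}-b_{n_{j+1}}\bigr)\leq \sum_{j\geq 0}b_{n_j}\leq 2,
\]
where the boundary term $(N+1)b_{n_{N+1}}\to 0$ justifies the telescoping.

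For the second claim I set $h(x):=xf(x)$. In either construction $h$ is continuous and eventually strictly increasing (since $f$ is non-decreasing with $f\nearrow\infty$), so $h$ admits a continuous inverse $g$ on $[h(x_0),\infty)$ for some $x_0$. For $n$ sufficiently large, writing $n=h(x)=xf(x)$ gives $g(n)=x$ and
\[
\frac{g(n)}{n}=\frac{x}{x\,f(x)}=\frac{1}{f(x)}\longrightarrow 0
\]
as $n\to\infty$, because $n\to\infty$ forces $x\to\infty$ (since $h$ is non-decreasing and unbounded) and $f(x)\to\infty$. The only real obstacle is arranging a single $f$ that is simultaneously non-decreasing, divergent to infinity, and keeps $\E[Xf(X)]$ finite; both approaches above resolve this by trading off the growth rate of $f$ against the decay of the tail $b_n$.
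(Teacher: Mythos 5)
Your proof is correct, and it is actually more complete than the paper's: the paper only proves the ``moreover'' assertion, treating the existence of $f$ as a direct quotation of de la Vall\'ee Poussin's criterion in its single-function form, whereas you both cite that result (with the correct observation that $f(x)=\Phi(x)/x$ inherits monotonicity from the convexity of $\Phi$ with $\Phi(0)=0$) and supply a self-contained dyadic construction via the tails $b_n=\sum_{k\ge n}k\,p_k$. On the second assertion the two arguments are algebraically the same identity, $g(n)/n = 1/f(g(n))$, read in two directions: the paper argues by contradiction (if $g(n_k)\ge \varepsilon n_k$ along a subsequence, then applying the non-decreasing map $x\mapsto xf(x)$ gives $n_k \ge \varepsilon n_k f(\varepsilon n_k)$, so $f(\varepsilon n_k)\le 1/\varepsilon$, contradicting $f\nearrow\infty$), while you compute the limit directly from $g(n)/n = 1/f(g(n))\to 0$. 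Your direct version is arguably cleaner and makes the invertibility hypothesis visible (you correctly note $h(x)=xf(x)$ is eventually strictly increasing, a point the paper leaves implicit). One cosmetic slip in your hand-built construction: you cannot in general arrange $b_{n_0}\le 2^{-0}=1$ with $n_0=0$, since $b_0=\E X$ may exceed $1$; either start the requirement $b_{n_j}\le 2^{-j}$ from $j\ge 1$, or replace the final bound $\E[Xf(X)]\le 2$ by $\E[Xf(X)]\le \E X + 1$. This does not affect the validity of the argument.
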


\begin{proof}

We prove only the last assertion, by contradiction. Suppose that there exists some
$\varepsilon > 0$ and an increasing sequence $(n_k)$ with $g(n_k) \ge \varepsilon n_k$.
Then, applying $x \mapsto x f(x)$ we get
\begin{equation*}
n_k \ge \varepsilon n_k f(\varepsilon n_k), \ \forall k
\quad \implies \quad
   f(\varepsilon n_k)\leq  \frac{1}{\varepsilon} , \ \forall k
\end{equation*}
contradicting that $f(n) \nearrow \infty$.
\end{proof}

From now on, we focus on two possible models of random environments describing one-dimensional reinforcements. 
\vspace{5pt}

\subsection{The Overlap Model}\label{subsection:overlap}
In this subsection we introduce formally the Overlap Model and we state the main theorem. Let $G$ be a quasi-transitive graph and consider the cartesian product $G\times \ZZ$ as defined in \eqref{def:gtimesz}.
    In each site $(0,n)$ of the vertical line $\{0\} \times \ZZ$, we place a box 
    \begin{equation}\label{eq:overlapBndef}
 B_n=B(n,X_n)
    \end{equation}

    with radius $X_n$ and we consider the improved region $R$ to be the union of these boxes. More precisely, we define
        \begin{equation}\label{eq:Roverlap}
            R = \bigcup_{n\in\ZZ} B_n.
        \end{equation}
        We let every edge $e\in R$ be open with probability $q$ and every edge $e\notin R$ open with probability $p$. In Figure \ref{figure:overlapmodel}, we sketch an environment for the Overlap Model for $G=\ZZ$. To let the drawing clear and to not lose proportionality, we choose to draw only boxes with centers $10$ units apart. The reader is invited to imagine the other boxes in between them.
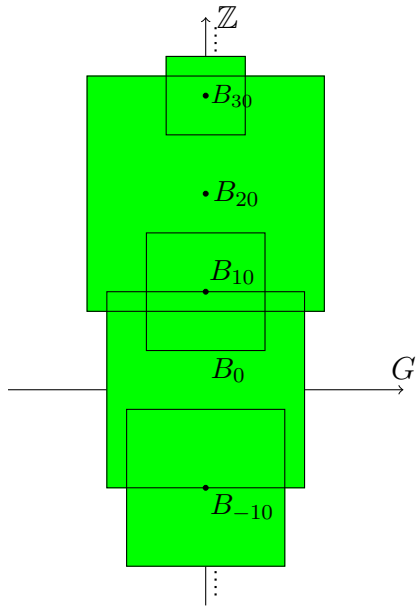
\begin{figure}[ht]
    \centering
    \begin{tikzpicture}[scale=1.3]
        \draw[->] (0,-2.2) --++ (0,6) node[right]{$\ZZ$};
    \draw[->] (-2,0) --++ (4,0) node[above]{$G$};
    \filldraw[black] (0,0) circle (.7pt) ;
    \begin{scope}[transparency group, opacity=0.2]

\filldraw [green] (-0.4,1.4) rectangle (0.4, 0.6);

\filldraw [green] (-1,1) rectangle (1, -1);
\filldraw [green] (-1.2,3.2) rectangle (1.2, 0.8);
\filldraw [green] (-0.8,-0.2) rectangle (0.8, -1.8);
\filldraw [green] (-0.4,3.4) rectangle (0.4, 2.6);
\end{scope}
\begin{scope}[transparency group, opacity=0.3]

\draw [black] (-0.6,1.6) rectangle (0.6, 0.4);
\draw [black] (-1,1) rectangle (1, -1);
\draw [black] (-1.2,3.2) rectangle (1.2, 0.8);
\draw [black] (-0.8,-0.2) rectangle (0.8, -1.8);
\draw [black] (-0.4,3.4) rectangle (0.4, 2.6);
\end{scope}

\node at (0.23,0.2) {\small{$B_0$}};
\node at (0.27,1.2) {\small{$B_{10}$}};
\node at (0.31,2) {\small{$B_{20}$}};
\node at (0.37,-1.2) {\small{$B_{-10}$}};
\node at (0.27,3) {\footnotesize{$B_{30}$}};

    \filldraw[black] (0,1) circle (.7pt) ;
    \filldraw[black] (0,-1) circle (.7pt) ;
    \filldraw[black] (0,3) circle (.7pt) ;

    \filldraw[black] (0,2) circle (.7pt) ;

    {\draw[dotted, thick] (0.1,3.45)--++ (0,0.25);
    \draw[dotted, thick] (0.1,-1.85)--++ (0,-0.25);
    }
    \end{tikzpicture}
    \caption{An environment for the Overlap Model}
    \label{figure:overlapmodel}
\end{figure}

The region $R=R(\Lambda)$ is well defined and we write simply $\P^{(R(\Lambda))}_{p,q}=\P^{\Lambda}_{p,q}$ and $\theta^{(R(\Lambda))}(p,q)=\theta^{\Lambda}(p,q)$. We call 
$\P^{\Lambda}_{p,q}$ the {\it quenched} probability measure associated with the environment $\Lambda$. 

Our main result for this model is the following:

\begin{teo}
\label{teo:overlap}
    Let $G$ be a quasi-transitive graph and consider the Overlap Model defined by iid random variables $\{X_n; n\in\mathbb{Z}\}$ with common distribution $X$ in $G\times\ZZ$. For $0<p < p_c< q < 1$, we have
\begin{equation}
\label{eq:teo_first_moment}
\theta^\Lambda(p,q) = 0\quad \Lambda\text{-a.s. }
\quad \iff \quad
E X < \infty.
\end{equation}

\end{teo}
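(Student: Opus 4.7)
The equivalence splits into two implications; both are driven by Borel--Cantelli estimates on the iid environment $\{X_n\}_{n\in\ZZ}$.

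\emph{Necessity.} If $EX=\infty$, then the events $\{X_n\ge 2|n|\}$ are independent with $\sum_{n\in\ZZ}\PP(X_n\ge 2|n|)\ge EX=\infty$, and the second Borel--Cantelli lemma produces, $\Lambda$-almost surely, a random sequence $n_k\to\infty$ along which $B(0,|n_k|)\subseteq B_{n_k}\subseteq R(\Lambda)$. Inside $B(0,|n_k|)$ every edge is then independently open with probability $q>p_c(G\times\ZZ)$, so by positivity of $\theta_q:=\PP_q(0\llr\infty)$ and a standard finite-volume FKG approximation there exists $c=c(q)>0$ with $\PP^\Lambda_{p,q}(0\llr\partial B(0,|n_k|/2))\ge c$ for all $k$ large. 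Sending $k\to\infty$ yields $\theta^\Lambda(p,q)\ge c>0$.

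\emph{Sufficiency: reduction to a deterministic cone.} Assume $EX<\infty$. Apply Lemma \ref{lema:improved_moment} to obtain $f\nearrow\infty$ with $E[Xf(X)]<\infty$, and let $g$ be the inverse of $xf(x)$, so that $g(n)/n\to 0$. The identity $\{X>g(n)\}=\{Xf(X)>n\}$ yields $\sum_{n\in\ZZ}\PP(X_n>g(|n|))\le 1+2E[Xf(X)]<\infty$, and Borel--Cantelli produces $N_0=N_0(\Lambda)<\infty$ with $X_n\le g(|n|)$ for every $|n|\ge N_0$. Setting
\[
\hat R := \bigcup_{|n|<N_0}B(n,X_n)\cup\bigcup_{|n|\ge N_0}B(n,g(|n|)),
\]
we get $R(\Lambda)\subseteq \hat R$, and the stochastic ordering $\PP^\Lambda_{p,q}\preceq\PP^{\hat R}_{p,q}$ (valid since $p<q$) gives $\theta^\Lambda(p,q)\le\theta^{\hat R}(p,q)$. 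Because $\hat R$ agrees, outside a finite ($\Lambda$-dependent) box, with the deterministic cone $C:=\bigcup_{n\in\ZZ}B(n,g(|n|))$, it suffices to prove $\theta^{C'}(p,q)=0$ for every region of the form $C'=C\cup\text{(finite box)}$---a deterministic statement.

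\emph{Main obstacle: multi-scale argument on the cone.} The cone $C$ has sublinear cross-section $g(|m|)=o(|m|)$ at height $m$, and the plan is a renormalization over dyadic annuli $A_k=B(0,2^{k+1})\setminus B(0,2^k)$, in which $C\cap A_k$ lies in a ``cylinder'' of width $g(2^{k+1})=o(2^k)$. Any open path from $0$ to $\partial B(0,2^{k+1})$ must cross $A_k$ and falls into one of two regimes: either (a) it spends a positive fraction of its edges outside $C$, in which case \eqref{eq:expdecay} at $p<p_c$ supplies an $e^{-c(p)2^k}$ bound; or (b) it lies essentially inside the width-$g(2^{k+1})$ cylinder, where the quasi-one-dimensional geometry together with $q<1$ imposes exponential decay at some rate $\lambda=\lambda(q,g(2^{k+1}))>0$ (finite-width cylinders have $p_c=1$). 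The technical heart of the proof is to verify, using the volume bound \eqref{eq:expgrowth} and the sublinearity $g(n)/n\to 0$, that $\lambda(q,g(2^k))\cdot 2^k\to\infty$; this closes the multi-scale loop because $\lambda$ a priori degenerates as its width argument grows (since $q>p_c(G\times\ZZ)$ is supercritical in the full graph). Once this quantitative control is in place, summability of $\PP^{C'}_{p,q}(0\llr\partial B(0,2^k))$ follows, and a final Borel--Cantelli in $k$ forces the cluster of the origin to be finite a.s., giving $\theta^{C'}(p,q)=0$ and hence $\theta^\Lambda(p,q)=0$.
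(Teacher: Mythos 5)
Your necessity direction is sound and matches the paper's Proposition \ref{prop:infinite_first_moment}: if $EX=\infty$, Borel--Cantelli shows $R=G\times\ZZ$ a.s., and then $q>p_c$ percolates. Your reduction of the sufficiency direction, via Lemma~\ref{lema:improved_moment} and Borel--Cantelli, to the containment of $R$ in a deterministic sublinear cone $C$ (up to a finite box) is also correct and is exactly how the paper sets up the good environments $\mathcal{A}$ with cones $W^\pm$.

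However, your proposed proof that $\theta^{C'}(p,q)=0$ via a dyadic-annulus renormalization has a genuine gap. The technical step you defer --- verifying $\lambda(q,g(2^k))\cdot 2^k\to\infty$ --- in fact \emph{fails} under the hypothesis you actually have, namely $g(n)/n\to0$. The one-arm decay rate in a cylinder $B_G(w)\times\ZZ$ with supercritical $q$ degenerates exponentially in the width: already for $G=\ZZ$ and strips of width $w$, blocking a vertical crossing requires a dual crossing of the strip, so $\lambda(q,w)\asymp(1-q)^{cw}$. Consequently $\lambda(q,g(n))\cdot n\to\infty$ forces $g(n)=O(\log n)$, which is far stronger than the $g(n)=o(n)$ that de la Vallée Poussin gives. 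For $g(n)=\sqrt{n}$, say, your regime (b) provides no useful bound; and regime (a) does not cover the intermediate behavior (a path that spends $o(n)$ but nonnegligible time outside $C$ belongs to neither regime cleanly), so the dichotomy does not close. Moreover, after replacing the random $R(\Lambda)$ by a deterministic $\hat R$, you can no longer invoke ergodicity of the annealed law to produce infinitely many cut events, which is the other essential ingredient.

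The paper sidesteps the cylinder-decay issue entirely. Instead of trying to show the cluster dies \emph{inside} the $q$-region $C$, it shows that the two cone halves $W^+$ and $W^-$ are disconnected with positive probability in the \emph{quenched} law $\P^\Lambda_{p,q}$. The point is that, once the finitely many edges of the central box $B$ are forced closed (an event of probability $(1-q)^{c(G,n_0)}>0$), any remaining connection from $W^+$ to $W^-$ must run through $(W^+\cup W^-\cup B)^c$, which contains no $R$-edges for good environments and is therefore governed by the subcritical $p$. The union bound \eqref{entropy} over cone layers then uses only \eqref{eq:expgrowth} and \eqref{eq:expdecay}, and converges precisely because $g(n)/n\to0$ --- exactly the rate Lemma~\ref{lema:improved_moment} supplies. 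Finally, ergodicity of the annealed measure under vertical shifts upgrades a positive-probability disconnection to infinitely many disconnections a.s., yielding $\theta^\Lambda(p,q)=0$. This is the missing mechanism in your sketch: you should estimate the $p$-connection between the two cones, not the $q$-survival inside one of them.
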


\begin{remark} Considering environments with radii given by random variables such that $X\geq r$ a.s., it is clear that any cylinder $F\times\ZZ$ is contained in the region $R$ if $r$ is chosen properly. In this setting the Overlap Model dominates the model where the improved region $R$ is a cylinder. In the context of quasi-transitive graphs $G$, this proves the conjecture by Szabó and Valesin \cite{valesin}  that the curve $p_c(q)$ defined in \eqref{eq:pcqcurve} is constant if the improved region is a deterministic cylinder.
\end{remark}

\begin{remark} In the Overlap Model as defined above, each edge in the enhanced region is open with a fixed probabilily $q<1$ and one could suppose that this probability increases  depending on how many boxes the edge belongs to. As it will be clear, our proof could be easily adapted to such case and the result would be the same. This relates to the fact that almost surely,
either every edge belongs to a finite number of boxes or the boxes cover the
whole space, see Proposition \ref{prop:infinite_first_moment}.

\end{remark}

       \subsection{The Stack Model}\label{subsection:stack}
In this subsection we define formally the Stack Model and state the main theorem.

To start the construction of the environment, place the box  $B(0,X_0)$ at the origin of $ G\times\ZZ$ and then stack succssivelly the remaining boxes in both directions along $\{0\}\times\ZZ$. More precisely. Let  $Z(0)=0$ and, for $n\in\mathbb{Z}\setminus\{0\}$,
\begin{equation}\label{eq:centerzn}
Z(n)=\begin{cases}
X_0+2\sum_{i=1}^{n-1} X_i +X_n,\quad\quad\text{ if }n\geq 1\\
    -X_0-2\sum_{i=1}^{-n-1} X_{-i} -X_{n},\text{ if }n\leq -1
\end{cases}
\end{equation}
be the center of the $n$-th box and set \begin{equation}\label{eq:stackBn}
    B_n=B(Z(n),X_n).
\end{equation} 
The improved region $R$ is again given by 
\begin{equation}\label{eq:Rstack}
R= \bigcup_{n\in\ZZ}B_n.
\end{equation}

       \begin{figure}[ht]
            \centering
            \begin{tikzpicture}
                \draw[->] (0,-3) --++ (0,7.3) node[right]{$\ZZ$};
    \draw[->] (-2,-0.8) --++ (4,0) node[above]{$G$};
{   \draw [fill=green, draw=black, fill opacity= 0.2] (-1,-1.8) rectangle (1, 0.2);

\node at (0.5,-0.5) {$B_0$};}
{\draw [fill=green, draw=black, fill opacity= 0.2] (-0.3,0.2) rectangle (0.3, 0.8);
\draw[fill=green, draw=black, fill opacity= 0.2] (-0.4,-1.8) rectangle (0.4,-2.6);}
{\draw [fill=green, draw=black, fill opacity= 0.2] (-0.4,0.8) rectangle (0.4, 1.6);}
{\draw [fill=green, draw=black, fill opacity= 0.2] (-0.8,1.6) rectangle (0.8, 3.2);
\node at (0.4, 2.6) {$B_3$};}
{\draw [fill=green, draw=black, fill opacity= 0.2] (-0.25,3.2) rectangle (0.25, 3.7);}
   {\draw[dotted, thick] (0.1,3.8)--++ (0,0.25);
    \draw[dotted, thick] (0.1,-2.7)--++ (0,-0.25);
    }
            \end{tikzpicture}
            \caption{An environment for the Stack Model}
            \label{figure:stackmodel}
        \end{figure}
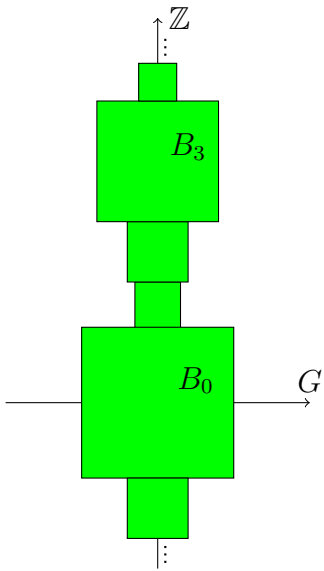
    As in the Overlap Model, the region $R=R(\Lambda)$ is well defined and we let every edge $e\in R$ be open with probability $q$ and every edge $e\notin R$ open with probability $p$ (see Figure \ref{figure:stackmodel}). For the sake of simplicity, we use the same notation to denote the probability measure associated to the Stack Model, that is  $\P^{(R(\Lambda))}_{p,q}=\P^{\Lambda}_{p,q}$ and $\theta^{(R(\Lambda))}(p,q)=\theta^{\Lambda}(p,q)$.

      In the Overlap Model, the arrangement of the center of the boxes  produces more density of boxes along a determined height, causing the region $R$ to be {\it large} when the random variables $X_n$ are fixed. In the Stack Model, instead,  at every fixed height the sample of only one box will determine the thickness of the improved region $R$. Moreover, we emphasize that in the Stack Model $R$ cannot ever be the whole space. Considering connectedness of $R$ preserved, this is the most sparse way that the boxes can be arranged along $\{0\}\times \ZZ$, being in a sense the extreme opposite of the Overlap Model. For that reason, although we see no straightforward coupling so that Overlap Model dominates Stack Model, the Stack Model produces sparser improved regions, at an heuristic level. As an effect, a weaker hypothesis is needed to prove essentially the same result.

\begin{teo}\label{teo:stack}
    Let $G$ be any quasi-transitive graph and consider the Stack Model in  $G\times\ZZ$. If $E\log|B_G(X)|<\infty$, then

    \begin{equation*}
\theta^\Lambda(p,q) = 0\quad \text{for $p\in [0,p_c), \;q\in [0,1)$}
   \end{equation*}
    almost surely on $\Lambda$.
\end{teo}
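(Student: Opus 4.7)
The goal is to show that $\PP^{\Lambda}_{p,q}(0 \leftrightarrow \infty) = 0$ for $\nu$-almost every environment $\Lambda$. The strategy combines the exponential decay \eqref{eq:expdecay} of subcritical $p$-percolation outside $R$ with the one-dimensional topology of the pipe $R$: in the Stack Model, consecutive boxes $B_n$ and $B_{n+1}$ share only the single horizontal layer at height $Z(n)+X_n$, so $R$ is essentially a 1D graph with random cross-sections.

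For each $n \in \ZZ$, I would define a lateral inflation $\tilde{B}_n = B_G(X_n + \ell_n) \times [Z(n)-X_n, Z(n)+X_n]$ of $B_n$, with $\ell_n = \lceil \alpha \log|B_G(X_n)|\rceil$ and $\alpha > 0$ to be fixed large in terms of $c(p)$ from \eqref{eq:expdecay}. The key one-box estimate is that, since $\tilde{B}_n\setminus B_n$ intersects no other $B_m$ except along horizontal edges in the two shared layers (negligible perturbation), its edges are $p$-edges, and \eqref{eq:expdecay} plus a union bound over at most $|B_n|$ starting points gives
\begin{equation*}
  \PP_p\bigl(B_n \leftrightarrow \partial_{\text{lat}}\tilde{B}_n\bigr)
  \;\leq\; |B_n|\, e^{-c(p)\ell_n}
  \;\leq\; (2X_n+1)\,|B_G(X_n)|^{\,1-c(p)\alpha}.
\end{equation*}
Choosing $\alpha > 2/c(p)$ and using $X_n \leq |B_G(X_n)|$ (valid for any infinite connected graph) makes the right-hand side at most $|B_G(X_n)|^{-1}$, whose environment expectation is finite by the hypothesis, and can be made arbitrarily small by further enlarging $\alpha$.

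Next, I would assemble these per-box estimates through a renormalization argument on coarser scales. Grouping $2^k$ consecutive boxes into scale-$k$ blocks, the probability that the origin's cluster traverses such a block is bounded by combining the lateral escape estimate above with a one-dimensional argument along the pipe: the cluster must pass from one end of the block to the other either by exiting laterally beyond some $\tilde{B}_n$ (exponentially unlikely by the above) or by crossing each $B_n$ along the pipe through $q$-edges, with the successive crossings linked by the narrow shared layers. The hypothesis $E\log|B_G(X)|<\infty$ ensures $E\ell_n<\infty$, yielding a law of large numbers on $\sum_{n=1}^{N} \ell_n$ and hence a Borel--Cantelli argument across scales $k$ that confines the origin's cluster to a finite set for $\nu$-a.e.\ $\Lambda$.

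The principal obstacle will be handling the random ``thick'' sections of the pipe, where $X_n$ is much larger than typical: when $q$ is close to $1$, the local $q$-percolation inside such a $B_n$ can produce a cluster covering essentially all of $B_n$, so the lateral inflation $\ell_n$ must be generous enough to encase these clusters via subcritical $p$-decay while keeping $E\ell_n$ finite. The rate $\ell_n \asymp \log|B_G(X_n)|$ is the precise balance: it makes $|B_n|\, e^{-c(p)\ell_n}$ small while $E\ell_n<\infty$. Carrying out this balance uniformly across scales, and merging the lateral-escape control with the narrow 1D pipe control in a single renormalization scheme, is the technical heart of the argument.
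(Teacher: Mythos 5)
Your proposal correctly identifies the right inflation rate $\ell_n \asymp \log|B_G(X_n)|$ matching the exponential decay in \eqref{eq:expdecay}, and the hypothesis $E\log|B_G(X)|<\infty$ is used in essentially the same spirit as the paper (to make a lateral-escape/entropy sum converge). However, there is a genuine gap in the mechanism that is supposed to force $\theta^\Lambda(p,q)=0$, and the renormalization scheme you sketch does not close it. Lateral confinement alone cannot kill percolation: showing that the origin's $q$-cluster is contained in $\bigcup_n \tilde B_n$ with high probability does not prevent it from being infinite, since $\bigcup_n \tilde B_n$ is an infinite region threaded by $q$-edges along the pipe. You need, almost surely, a \emph{cut} of the pipe, i.e.\ a finite set of closed edges separating the origin from infinity. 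Your proposal asserts the cluster must cross each $B_n$ ``through the narrow shared layers,'' suggesting those layers are bottlenecks. They are not: the waist between $B_n$ and $B_{n+1}$ contains $|B_G(\min(X_n,X_{n+1}))|$ vertices, which is unbounded, and for $q$ near $1$ each such crossing occurs with probability tending to $1$. Nothing in your scheme produces a disconnection event of uniformly positive probability, so the Borel--Cantelli argument across scales has no good events to feed on.

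The paper supplies the missing ingredient with the event $A_k$ of Lemma \ref{lema:stackAk}: $X_k$ is \emph{bounded} ($l_0\le X_k\le 2l_0$) while the neighbouring radii are controlled by $\varphi^{-1}(j+L)$. On such an index one can close every edge of the single box $B_k$ at a cost $(1-q)^{c(G,l_0)}$ that is bounded below uniformly in $k$ and $\Lambda$, severing the pipe there; the cone estimate \eqref{connection} (an analogue of your per-box lateral escape bound, organised as a two-sided layer sum as in \eqref{entropy}) shows the $p$-cluster cannot detour around the closed box through the bulk. This yields the uniform lower bound \eqref{eq:propconeinequality} on the disconnection probability. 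Finally, rather than a renormalization across dyadic scales, the paper runs an exploration/restart argument: by ergodicity $A_k$ occurs infinitely often, one explores the cluster of $W^-_{n_k}$, and if the exploration fails one moves to a fresh $A_{n_{k+1}}$ beyond the explored region, giving independent attempts each succeeding with probability at least $\tfrac12(1-q)^{c(G,l_0)}$; hence some attempt succeeds a.s.\ in both directions. Without the bounded-box blockade and the independence-by-restart structure, your argument has no step that actually produces the disconnection.
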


The condition $E\log|B_G(X)|<\infty$ in Theorem \ref{teo:stack} expresses the exact relationship between the distribution of $X$, the growth of the graph $G$ and the exponential decay \eqref{eq:expdecay} that must be satisfied for the theorem to hold. In fact, as we will see in Section \ref{section:stackperc}, the $\log$ function appears in the expression as a consequence of the exponential decay of $\theta_n(p)$.

By the fact  that quasi-transitive graphs grow at most exponentially fast, see \eqref{eq:expgrowth}, Theorem \ref{teo:stack} guarantees, for any such graph, that there is no percolation in the Stack Model whenever $EX<\infty$. But if the asymptotic behavior of $|B_G(n)|$ is known to be lesser than exponential, we can give an explicit condition on the moment of the radii for the theorem to hold. For example, if $G=\ZZ^{d-1}$, then $G\times\ZZ=\ZZ^d$ and in this case we know that there exists a constant $C_d>0$ such that $|B_G(n)|\leq C_d n^d $, so it is sufficient to choose iid radii with $E\log X<\infty$ for the theorem to hold in this case.

\section{The Overlap Model: Proof of Theorem \ref{teo:overlap}}

Consider the Overlap Model with radii given by independent random variables with common distribution $X$ of finite expectation. Take $f$ and $g$ as in Lemma \ref{lema:improved_moment} and using the  estimate from this lemma, we have
\begin{equation*}
    \sum_{n\geq 1} \nu(X_n \ge g(n)) = \sum_{n\geq 1} \nu(X_n f(X_n) \ge n) = \sum_{n\geq 1} \nu(X f(X) \ge n) < \infty.
\end{equation*}
By the Borel-Cantelli's Lemma we can conclude that
\begin{equation}\label{eq:limsupxn}
\nu(\liminf_n \{X_n \le g(n)\}) =
1.
\end{equation}
 Hence, although the region $R$ is random, all except finitely many boxes in $R$ are almost
surely contained in a
deterministic cone, whose growth we can estimate. It will be useful to find a deterministic $n_0$ such that all boxes with center above $n_0$ are within some deterministic region with positive probability.

To formalize these ideas, we start by defining the deterministic cones properly.

\begin{defi}\label{def:overlapcones}
    Given $n_0\in\NN$, such that $g(n)<n/2$ for all $n\geq n_0$. The {\it upwards} and the {\it downwards cone} are, respectively, the sets
    \begin{equation*}
    W^{+} = \bigcup_{n \ge 2n_0} B_G(g(n)) \times \Bigl[\frac{n}{2}, \infty\Bigr)
    \quad\text{and}\quad
    W^{-} = \bigcup_{n \ge 2n_0} B_G(g(n)) \times \Bigl(-\infty, -\frac {n}{2}\Bigr].
\end{equation*}
\end{defi}
Recall the notation $B_n=B(n,X_n)$. See \eqref{eq:Bndef} and \eqref{eq:overlapBndef}.
\begin{defi}\label{def:goodenvironments}
     Given $n_0\in\NN$ and $W^+$, $W^-$ as in Definition \ref{def:overlapcones}, we say that $\Lambda$ is a {\it good} environment if it satisfies
    \begin{enumerate}
        \item The boxes $B_n$ with $n\ge 2n_0$ are contained in $W^+$:\quad
        $\bigcup\limits_{n\geq 2n_0}B_n\subseteq W^+$;
        
        \item The boxes $B_{-n}$ with $n\ge 2n_0$ are contained in $W^-$:\quad
        $\bigcup\limits_{n\geq 2n_0}B_{-n}\subseteq W^-$;
       
        \item The boxes $B_n$ with $n\in(-2n_0,2n_0)$ have radius $X_n\leq g(2n_0)$.
       
    \end{enumerate}
    We denote by $\mathcal{A}=\mathcal{A}(n_0)$ the set of all good environments.
\end{defi}

We notice that in a good environment given by Definition~\ref{def:goodenvironments} the random region $R$ is covered by the deterministic region $\bigl(B_G(g(2n_0)) \times [-n_0,n_0]\bigr) \cup W^+ \cup W^-$. Cones $W^+$ and $W^-$ are illustrated in Figure~\ref{fig:cones} below, where they are decomposed into layers. 

\begin{lema}\label{lema: environment}
There exists $n_0$ such that the set of good environments $\mathcal{A}$ has $\nu(\mathcal{A})>0.$
\end{lema}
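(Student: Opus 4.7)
The plan is to decompose $\mathcal{A}(n_0)$ into three sub-events, each depending on a disjoint sub-collection of the family $\{X_n\}_{n \in \ZZ}$, show that for $n_0$ large enough each has positive probability, and conclude by independence.

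First, I would reduce conditions 1 and 2 of Definition~\ref{def:goodenvironments} to pointwise conditions on the radii. For $n \geq 2n_0$ (so in particular $n \geq n_0$) one has $g(n) \leq n/2$, so on the event $\{X_n \leq g(n)\}$ the box $B_n = B_G(X_n) \times [n-X_n, n+X_n]$ is contained in $B_G(g(n)) \times [n/2, \infty) \subseteq W^+$. Setting
\[
\tilde{A}_+ := \{X_n \leq g(n) \text{ for all } n \geq 2n_0\}, \qquad
\tilde{A}_- := \{X_{-n} \leq g(n) \text{ for all } n \geq 2n_0\},
\]
one has $\tilde{A}_+$ implies condition 1 and $\tilde{A}_-$ implies condition 2.

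Second, I would invoke the Borel--Cantelli consequence \eqref{eq:limsupxn}, namely $\nu(\liminf_n \{X_n \leq g(n)\}) = 1$, which is equivalent to $\nu(\tilde{A}_+) \to 1$ as $n_0 \to \infty$. By symmetry (the $X_{-n}$ are i.i.d.\ copies of $X$), the same holds for $\tilde{A}_-$.

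Third, I would handle condition 3, namely the event $A_3 := \{X_n \leq g(2n_0) \text{ for all } |n| < 2n_0\}$, which is a finite intersection of $4n_0 - 1$ i.i.d.\ events. Since $g(m) \nearrow \infty$ and $X$ is $\NN$-valued, $\nu(X \leq g(2n_0)) > 0$ for $n_0$ sufficiently large, and hence $\nu(A_3) = \nu(X \leq g(2n_0))^{4n_0 - 1} > 0$. The three events $\tilde{A}_+$, $\tilde{A}_-$, $A_3$ are measurable with respect to the disjoint families $\{X_n : n \geq 2n_0\}$, $\{X_n : n \leq -2n_0\}$, and $\{X_n : |n| < 2n_0\}$, hence independent, giving
\[
\nu(\mathcal{A}(n_0)) \geq \nu(\tilde{A}_+) \, \nu(\tilde{A}_-) \, \nu(A_3) > 0
\]
for $n_0$ large enough. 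No serious obstacle arises: the only delicate point is that condition 3 involves a number of variables growing like $n_0$, but since $g(2n_0) \to \infty$ each factor in the product defining $\nu(A_3)$ is close to $1$, and one can if needed quantify this via Markov's inequality applied to $Xf(X)$ to obtain a uniform lower bound of the form $e^{-cE[Xf(X)]}$.
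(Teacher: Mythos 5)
Your proof is correct and follows essentially the same route as the paper: the same decomposition into the three events (boxes above, boxes below, finitely many middle boxes), the same pointwise reduction $\{X_n\leq g(n)\}\Rightarrow B_n\subseteq W^+$ using $g(n)\leq n/2$, the same appeal to the Borel--Cantelli consequence \eqref{eq:limsupxn} for the two tails, and the same independence argument to combine the three pieces.
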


\begin{proof}
First notice that the finite intersection of independent events 
$$
\bigcap_{n=-2n_0+1}^{2n_0-1}\{X_n\leq g(2n_0)\}
$$
has positive probability for all $n_0$ such that $g(2n_0)\geq m$, where $m$ is some constant depending on the distribution of $X$.
By Lemma \ref{lema:improved_moment} we can choose $n_0$ sufficiently large so that $g(n)\leq n/2$ for all $n\geq n_0$. Fix $n_0$ with these properties. In order to show that 
$$
\nu\Bigl(\bigcup_{n\geq 2n_0}B_n\subseteq W^+\Bigr)>0
$$
it is sufficient to show that with positive probability $B(n,X_n)\subseteq B(n,g(n))$ for all $n\geq 2n_0$. It follows from \eqref{eq:limsupxn} that 
$$
\nu\Bigl(\bigcap_{n\geq 2n_0}\{X_{n}\leq g(n)\}\Bigr)>0.
$$

By the same reasoning we also show that 
$$
\nu\Bigl(\bigcap_{n\geq 2n_0}\{X_{-n}\leq g(n)\}\Bigr)>0.
$$
 We conclude that $\nu(\mathcal{A})>0$, being the intersection of three independent events of positive probability.
\end{proof}

Proceeding to the proof of the theorem, first we show that one of the implications of the statement is trivial. 
\begin{prop}
\label{prop:infinite_first_moment}
$E X = \infty$ if, and only if, $R=G \times \ZZ$ almost surely.
\end{prop}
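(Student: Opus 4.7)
The plan is to rewrite the event $\{v\in R\}$ for a site $v=(x,a)\in G\times\ZZ$ as a union of independent events indexed by $n\in\ZZ$, and then apply the two halves of Borel--Cantelli. Setting $k:=d_G(0,x)$, the site $v$ lies in $B_n=B(n,X_n)$ precisely when $X_n\geq\max(|n-a|,k)$, so
\[
\{v\in R\}=\bigcup_{n\in\ZZ}\{X_n\geq \max(|n-a|,k)\},
\]
and these events are independent since the $X_n$ are. After shifting the index one finds that
\[
\sum_{n\in\ZZ}\nu\bigl(X\geq \max(|n-a|,k)\bigr)=(2k+1)\,\nu(X\geq k)+2\sum_{m>k}\nu(X\geq m),
\]
which is finite if and only if $EX<\infty$, since $EX=\sum_{m\geq 1}\nu(X\geq m)$. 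This single summability dichotomy drives both directions.

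For the direction $EX=\infty\Rightarrow R=G\times\ZZ$ a.s., I apply the independent form of Borel--Cantelli: the sum above is infinite for every $v$, so $v\in R$ occurs $\nu$-almost surely. Because $V(G\times\ZZ)$ is countable, intersecting over all sites yields $R=G\times\ZZ$ almost surely.

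For the direction $EX<\infty\Rightarrow R\neq G\times\ZZ$ a.s., it suffices to exhibit a single site that fails to be in $R$ with positive probability. Since $X$ is $\nu$-almost surely finite, I choose $k$ so large that $\nu(X\geq k)<1$; because $G$ is infinite and connected, some $x\in G$ satisfies $d_G(0,x)=k$. Then, by independence,
\[
\nu\bigl((x,0)\notin R\bigr)=\prod_{n\in\ZZ}\bigl(1-\nu(X\geq \max(|n|,k))\bigr),
\]
with each factor bounded below by $1-\nu(X\geq k)>0$. Using $-\log(1-p)\leq p/(1-\nu(X\geq k))$ whenever $p\leq\nu(X\geq k)$, the logarithm of the product is bounded in absolute value by a constant times the finite sum $\sum_n\nu(X\geq\max(|n|,k))$, so $\nu((x,0)\notin R)>0$ and the a.s. equality is refuted. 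The argument is essentially bookkeeping and I do not anticipate any serious obstacle; the only mild points to keep track of are the independence of the events across $n$ (needed for the converse Borel--Cantelli) and choosing $k$ large enough that $\nu(X\geq k)<1$, which is what makes the infinite product strictly positive.
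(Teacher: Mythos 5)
Your proof is correct and follows essentially the same route as the paper's: reduce membership of a fixed site $(x,a)$ in $R$ to the union of the independent events $\{X_n\ge\max(|n-a|,k)\}$, observe that the relevant sum of probabilities is finite precisely when $EX<\infty$, and invoke Borel--Cantelli together with countability of the vertex set. You are in fact more careful than the paper on the converse direction: the paper attributes that implication to ``Borel--Cantelli's lemma'' without spelling it out, whereas the first Borel--Cantelli lemma alone only controls the tail of the covering events, and your clean infinite-product estimate (choosing $k$ with $\nu(X\ge k)<1$ and bounding $-\log(1-p)$ by $p/(1-\nu(X\ge k))$) is exactly what is needed to get $\nu((x,0)\notin R)>0$. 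One small wording nit: in that paragraph you announce ``$R\neq G\times\ZZ$ a.s.'' but then correctly prove only the weaker $\nu(R\neq G\times\ZZ)>0$, which is all the biconditional requires; the stronger a.s.\ statement would need an extra zero--one argument that you neither give nor need.
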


\begin{proof}
We show that any fixed vertex $v$ will be covered by some ball. By translation
invariance in the $\ZZ$ direction, we can assume $v = (w, 0)$ and denote 
$r = d_G(w,0)$. Notice that a ball centered at $(0,n)$ with $n > r$
will cover $v$ if its radius satisfies $X_n \ge n$. Since
\begin{equation*}
    \sum_{n \ge r} \nu(X_n \ge n) = \infty
    \quad \text{is equivalent to}\quad 
    E X = \infty,
\end{equation*}
and the events $\{X_n \ge n\}$ are independent, the result follows from
Borel-Cantelli's lemma.
\end{proof}
This proves the first implication since when $R=G\times \ZZ$ we are in the supercritical phase of homogeneous percolation whenever $q>p_c(G\times\ZZ)$.

\begin{proof}[Proof of Theorem \ref{teo:overlap}.]
  We decompose $W^{+}$ and $W^-$ into layers $(L^{+}_n; n \ge n_0)$. Define
\begin{equation}
    \label{eq:layers_W}
    L^{+}_{n} := B_G(g(2n)) \times \{n\}.
\end{equation}
 Notice that $W^+= \bigcup_{n\geq n_0} L_n^+$. In fact, let $n\geq n_0$, the radius of $L_n^+$ at $G\times\{n\}$ is $g(2n)$, which is also the radius of $W^+$ at this height, because the cylinder in $W^+$ with the largest radius that has non empty intersection with $G\times\{n\}$ is  $B_G(g(2n))\times[n,\infty)$.

It follows by quasi-transitivity of $G$ that $B_G(n)$ grows at most exponentially fast with $n$. As a consequence,  the number of sites in each layer is
\begin{equation}
    \label{eq:layers_W_sites}
    |L^{+}_n| = | B_G(g(2n))| \le e^{c_G g(2n)},
\end{equation}
where $c_G>0$ is some constant depending only on $G$.
We define analogously the quantities $L^-_{n}$ for $W^-$.

Let us estimate $\P_p(W^+ \leftrightarrow W^-)$.  We have
\begin{equation*}
\P_p(W^+ \leftrightarrow W^-)
    \le \sum_{n,m \ge n_0/2} \P_p(L_n^+ \leftrightarrow L_m^-).
\end{equation*}
Notice that any $w^+ \in L_n^+$ and $w^- \in L_m^-$ have vertical distance
$m+n$. Since $G$ is quasi-transitive, let $c=c(p)$ be as in $\eqref{eq:expdecay}$.

Summing over all possible pairs, we have
\begin{equation*}
\P_p(L_n^+ \leftrightarrow L_m^-)
    \le |L_n^+||L_m^-|e^{- c (m+n)}\le e^{c_G (g(2n) + g(2m))} e^{- c(m+n)}.
\end{equation*}
Summing for $n, m \ge n_0/2$ we obtain
\begin{equation}\label{entropy}
\P_p(W^+ \leftrightarrow W^-)
    \le  \sum_{n,m \ge n_0/2} e^{c_G (g(2n) + g(2m))} e^{- c(m+n)}
    = 
    \Bigl(\sum_{n \ge n_0/2} e^{n (- c + c_G 2\frac{g(2n)}{2n}) }\Bigr)^2.
\end{equation}

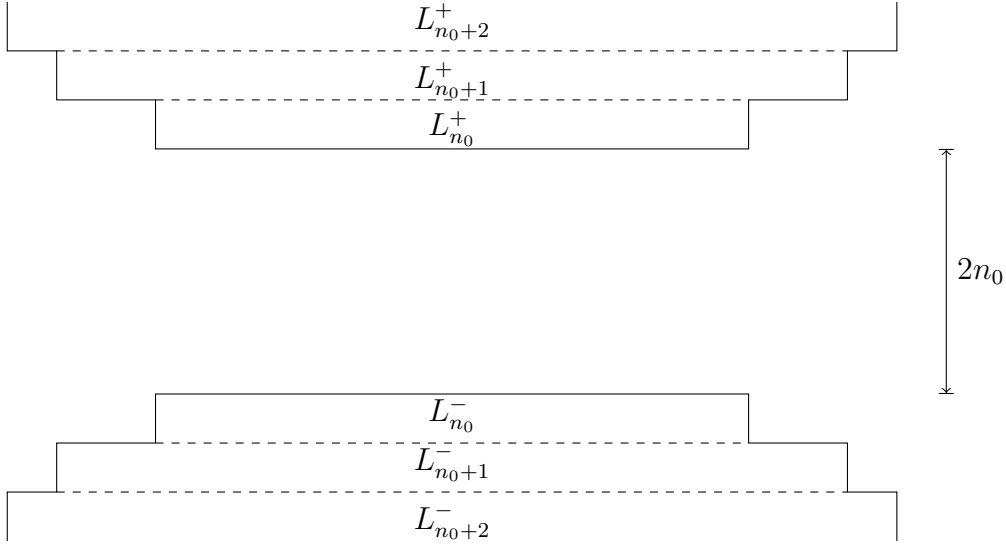
\begin{figure}[ht]
\centering
\begin{tikzpicture}[scale=0.65]
    \draw[|<->|] (10,0) -- ++(0,-5) node[midway, right] {$2n_0$};
\draw (0,0) -- ++(6,0) -- ++(0,1) -- ++(2,0) -- ++ (0,1) -- ++(1,0) --++(0,1);
\draw (0,0) -- ++(-6,0) -- ++(0,1) -- ++(-2,0) -- ++ (0,1) -- ++(-1,0) --++(0,1);
\draw[dashed] (-6,1) -- (6,1) node[midway, below] {$L_{n_0}^{+}$};
\draw[dashed] (-8,2) -- (8,2) node[midway, below] {$L_{n_0+1}^{+}$};
\node[above] at (0,2) {$L_{n_0+2}^{+}$};

    \begin{scope}[shift={(0,-5)}]
\draw (0,0) -- ++(6,0) -- ++( 0,-1) -- ++( 2,0) -- ++ (0,-1) -- ++( 1,0) --++(0,-1);
\draw (0,0) -- ++(-6,0) -- ++(0,-1) -- ++(-2,0) -- ++ (0,-1) -- ++(-1,0)
    --++(0,-1);
\draw[dashed] (-6,-1) -- (6,-1) node[midway, above] {$L_{n_0}^{-}$};
\draw[dashed] (-8,-2) -- (8,-2) node[midway, above] {$L_{n_0+1}^{-}$};
\node[below] at (0,-2) {$L_{n_0+2}^{-}$};
\end{scope}
\end{tikzpicture}
\caption{Cones and their layers}
\label{fig:cones}
\end{figure}
Recalling that by Lemma~\ref{lema:improved_moment} we have $g(n)/n\ra0$,  we conclude that the sum of the series above is
convergent and we can actually make it as close to zero as we want by increasing $n_0$.

Now we proceed to transport this result to the measure $\P_{p,q}^\Lambda$. Fix $n_0$ large enough so that
$$
\P_p(W^+ \llr W^-)\leq \frac12.
$$
 
 Let $B= B_G(g(2n_0)) \times [-n_0, n_0]$. Recall that for good environments $\Lambda\in \mathcal{A}$, the region $R$ is a subset of $B\cup W^+\cup W^-$. Let $D=\{W^+ \nllr W^-\}$ and 
$
F=\{\text{every edge inside $B$ is closed}\}.
$ 
Notice that,
\begin{align}
\P_{p,q}^\Lambda(D^c\cap F)&=\P_{p,q}^\Lambda(\{W^+\xleftrightarrow{B^c} W^-\}\cap F)\nonumber\\
&= \P_{p,q}^\Lambda(W^+\xleftrightarrow{B^c} W^-)\P_{p,q}^\Lambda(F)\nonumber\\
& =\P_{p}(W^+\xleftrightarrow{B^c} W^-)\P_{p,q}^\Lambda(F)\nonumber\\
& \leq \P_{p}(D^c)\P_{p,q}^\Lambda(F).\nonumber
\end{align}
As a consequence, we have 
\begin{equation}\label{eq:Ddecreasing}
\P_{p,q}^\Lambda(D\cap F)=\P_{p,q}^\Lambda(F)- \P_{p,q}^\Lambda(D^c\cap F)\geq \P_{p,q}^\Lambda(F)\P_{p}(D).
\end{equation}
Let
\begin{equation}
R^+= \bigcup_{n\geq n_0} B(n,X_n)
\quad \text{and}\quad
R^-= \bigcup_{n\geq n_0} B(-n,X_{-n}).
\end{equation}

Since $\Lambda\in\mathcal{A} $, we have  that $R^\pm\subset W^\pm$. Using this fact and \eqref{eq:Ddecreasing},  we get
\begin{align}\label{lifteq}
\PP^{\Lambda}_{p,q}(R^+\nleftrightarrow R^-)&\geq \PP^{\Lambda}_{p,q}(D) \ge \PP^{\Lambda}_{p,q}(D \cap F)\\
&\geq \PP^{\Lambda}_{p,q}(F) \PP_{p}(D)\geq \frac{1}{2}(1-q)^{c(G,n_0)},\nonumber
\end{align}
where $c(G,n_0)$ is is the number of edges in $B_G(g(2n_0)) \times [-n_0, n_0]$.

Now, in order to use ergodic properties, we define the  {\it annealed} law
$$
\P_{p,q}^\nu(\cdot)= \int_\Xi\P_{p,q}^\L(\cdot)d\nu(\L).
$$
 The main idea is that when we show that an event has propability $1$ in the annealed law, then it also has probability $1$ for $\nu$-almost all environment $\Lambda$.

We have,
$$
\P_{p,q}^\nu(D)=\int_\Xi\P_{p,q}^\L(D)d\nu(\L)\geq \int_{\mathcal{A}}\P_{p,q}^\L(D)d\nu(\L)\geq \frac12 (1-q)^{c(G,n_0)}\nu(\mathcal{A})>0.
$$

Now, let $\tau$ be the vertical translation and consider the event where there exists $n$ such that  $\tau^n(D)$ occurs. Clearly this event is invariant by $\tau$. By ergodicity of the annealed measure with respect to vertical translations, we conclude that there are infinitely many vertical disconnections almost surely, and this implies that $\theta^\Lambda(p,q)=0$ $\nu$-almost surely. Indeed, in order for the cluster of the origin to be infinite whenever $p<p_c(G\times\ZZ)$, it has to have infinite intersection with $R$.  
\end{proof}

\section{The Stack Model: Proof of Theorem \ref{teo:stack}}\label{section:stackperc}

In this section we prove Theorem \ref{teo:stack}. We actually prove it in a slightly more general setting. We choose a function  $\varphi\geq 0$ such that $\varphi\nearrow\infty$, $E\varphi(X)<\infty$ and we investigate what additional properties it has to satisfy for the result to hold.

Let $\varphi\geq 0$ be any increasing   function and suppose $E\varphi(X)<\infty$. 
As $X$ is unbounded, the set $\mathcal{L}=\{m\in\NN \,|\,\nu(m\leq X\leq 2m)>0\}$ is infinite. 
\begin{lema}\label{lema:stackAk}
    Let  $L=\min\{l\in\NN\,|\,  \nu(\varphi(X)\leq l)>0\}$,  $l_0\in\mathcal{L}$  and consider the event 
$$
A_k= \{l_0\leq X_{k}\leq 2l_0\}\cap\bigcap_{j\geq 1}\{X_{k+j}\leq \varphi^{-1}(j+L)\}\cap\bigcap_{j\geq 1}\{X_{k-j}\leq \varphi^{-1}(j+L)\}.
$$
Then, for every $k\in\ZZ$,
$
\nu(A_k)>0. 
$
Moreover, $\nu(A_k)$ is constant as a function of $k$.
\end{lema}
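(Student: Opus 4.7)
The plan is to factor $A_k$ into independent events, verify that each has positive probability, and observe that constancy in $k$ is free. Since $\{X_n\}$ is i.i.d., a shift of the index translates each of the constraints to the same collection of distributional statements about independent copies of $X$. Thus $\nu(A_k) = \nu(A_0)$ for every $k$, which settles the second assertion.

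For positivity of $\nu(A_0)$, I would write
\begin{equation*}
\nu(A_0) = \nu(l_0 \leq X_0 \leq 2l_0)\cdot\prod_{j\geq 1}\nu\bigl(X_j\leq \varphi^{-1}(j+L)\bigr)\cdot\prod_{j\geq 1}\nu\bigl(X_{-j}\leq \varphi^{-1}(j+L)\bigr),
\end{equation*}
using independence of the $X_n$. The leading factor is strictly positive by the choice $l_0\in\mathcal{L}$. Since $\varphi$ is nondecreasing, I can read $\{X\leq\varphi^{-1}(y)\}$ as $\{\varphi(X)\leq y\}$ (working with the generalized inverse). Then for each $j\geq 1$,
\begin{equation*}
\nu\bigl(X\leq \varphi^{-1}(j+L)\bigr) = \nu\bigl(\varphi(X)\leq j+L\bigr)\geq \nu\bigl(\varphi(X)\leq L\bigr)>0
\end{equation*}
by the definition of $L$, so every factor in the two tail products is strictly positive.

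The step that actually needs the hypothesis is the convergence of the two infinite products to a strictly positive limit. This is equivalent to
\begin{equation*}
\sum_{j\geq 1}\nu\bigl(X>\varphi^{-1}(j+L)\bigr) = \sum_{j\geq 1}\nu\bigl(\varphi(X)>j+L\bigr) <\infty,
\end{equation*}
which I would obtain from the standard tail-sum identity applied to $\lceil\varphi(X)\rceil$:
\begin{equation*}
\sum_{j\geq 1}\nu\bigl(\varphi(X)>j+L\bigr)\leq \sum_{k\geq 1}\nu\bigl(\varphi(X)>k\bigr)\leq E\varphi(X)<\infty.
\end{equation*}
Since all factors lie in $(0,1]$ and the sum of their $1$-complements converges, each product is strictly positive, and therefore $\nu(A_0)>0$.

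The main conceptual point, and the only place any work happens, is the passage from $E\varphi(X)<\infty$ to the positivity of the infinite products; this is why the event $A_k$ is designed with the threshold $\varphi^{-1}(j+L)$, matching exactly the tail decay rate controlled by $\varphi$. Everything else reduces to bookkeeping about generalized inverses of nondecreasing functions and a routine application of Fubini/Tonelli for the tail sum.
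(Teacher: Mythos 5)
Your proposal is correct and follows essentially the same route as the paper: factor $\nu(A_k)$ over the independent coordinates into $\nu(l_0\leq X\leq 2l_0)\bigl(\prod_{j\geq 1}\nu(\varphi(X)\leq j+L)\bigr)^2$, note the identity is $k$-free, and deduce positivity from $E\varphi(X)<\infty$. The only difference is that you spell out the last step (convergence of the infinite product via the tail-sum bound $\sum_{j\geq 1}\nu(\varphi(X)>j+L)\leq E\varphi(X)$), which the paper leaves implicit; the brief aside about $\lceil\varphi(X)\rceil$ is unnecessary but harmless.
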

\begin{proof}
As the sequence of random variables is iid, we have
\begin{align*}
    \nu(A_k)=\nu(l_0\leq X\leq2l_0)\biggl(\prod_{j\geq 1} \nu(\varphi(X)\leq j+L)\biggr)^2
\end{align*}
which is positive since $E\varphi(X)<\infty$. Furthermore, the identity also shows that $\nu(A_k)$ does not depend on $k$. 
\end{proof}

Recall from \eqref{eq:stackBn} that $B_n=B(Z(n),X_n)$. For $k\in\ZZ$, it will also be useful to define the subregions
\begin{equation}\label{eq:stackR+}
R^+_k=\bigcup_{n> k} B_n
\end{equation}
and
\begin{equation}\label{eq:stackR-}
R^-_{k}=\bigcup_{n< k} B_n.
\end{equation}
Notice that for any $k\in\ZZ$, $R=R^+_k\cup B_k\cup R^-_k$.
Now we proceed to define analogous structures as done for the Overlap Model in Definition \ref{def:overlapcones}.

\begin{defi}\label{def:stackcones} 
For $k\in\ZZ$, we define the  {\it upwards cone} $W^+_k$ and the {\it downwards cone} $W^-_k$ as the sets

$$
W^+_k= \bigcup_{n\geq 0} B_G(\varphi^{-1}(n+L+1))\times [Z(k)+l_0+n,\infty)
$$
and
$$
W^-_k= \bigcup_{n\geq 0} B_G(\varphi^{-1}(n+L+1))\times (-\infty,Z(k)-l_0-n].
$$
\end{defi}

\begin{prop}\label{prop:coneinequality}
Let $c=c(p)$ be as in \eqref{eq:expdecay}. There exists $l_0$ large enough such that for an environment $\Lambda\in A_k$, if 
\begin{equation}\label{eq:limsup2}
      \sum_{n\geq 1}|B_G(\varphi^{-1}(n+L))|e^{-cn}<\infty
   \end{equation}
then, there exists a constant $c(G,l_0)$ such that 
\begin{equation}\label{eq:propconeinequality}
    \P_{p,q}^\L(R^+_{k}\nleftrightarrow R^-_{k})\geq\frac12 (1-q)^{c(G,l_0)}>0.
\end{equation}
\end{prop}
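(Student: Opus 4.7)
The plan is to mirror, in three steps, the argument used for the Overlap Model in the proof of Theorem~\ref{teo:overlap}: first confine the random reinforcement inside deterministic cones on the event $A_k$; then bound the homogeneous cone-to-cone connection probability via exponential decay; and finally use the middle box $B_k$ as a buffer whose complete closure has positive cost.

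First I would check the deterministic containments $R_k^{+}\subseteq W_k^{+}$ and $R_k^{-}\subseteq W_k^{-}$ on $A_k$. For the $+$ side, the stacking recursion $Z(n+1)-Z(n)=X_n+X_{n+1}$ gives, for every $n>k$,
$$Z(n)-X_n=Z(k)+X_k+2\sum_{i=1}^{n-k-1}X_{k+i}\geq Z(k)+l_0+2(n-k-1),$$
using $X_k\geq l_0$ and $X_{k+i}\geq 1$, while the $G$-radius is bounded by $X_n\leq\varphi^{-1}(n-k+L)$. Choosing the index $m=n-k-1$ in Definition~\ref{def:stackcones} then fits $B_n$ inside $B_G(\varphi^{-1}(m+L+1))\times[Z(k)+l_0+m,\infty)\subseteq W_k^{+}$; the minus side is symmetric. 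Consequently $\{W_k^{+}\nleftrightarrow W_k^{-}\}\subseteq\{R_k^{+}\nleftrightarrow R_k^{-}\}$, reducing the claim to a lower bound on the disconnection probability between the two deterministic cones.

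Next, I would estimate $\P_p(W_k^{+}\leftrightarrow W_k^{-})$ using the natural staircase layers $L_m^{\pm}=B_G(\varphi^{-1}(m+L+1))\times\{Z(k)\pm(l_0+m)\}$, whose vertical separation is $2l_0+m+m'$. The union bound together with \eqref{eq:expdecay} gives
$$\P_p(W_k^{+}\leftrightarrow W_k^{-})\leq\sum_{m,m'\geq 0}|L_m^{+}|\,|L_{m'}^{-}|\,e^{-c(p)(2l_0+m+m')}\leq e^{-2cl_0}\Bigl(\sum_{m\geq 0}|B_G(\varphi^{-1}(m+L+1))|\,e^{-cm}\Bigr)^{2}.$$
The inner series is finite thanks to \eqref{eq:limsup2} (the $+1$ index shift is harmless), so taking $l_0$ sufficiently large forces the whole bound below $\tfrac12$. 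This is precisely where the phrase ``$l_0$ large enough'' in the statement enters.

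Finally, set the buffer $I_k=B_G(2l_0)\times[Z(k)-2l_0,Z(k)+2l_0]$, which on $A_k$ contains $B_k$ (since $X_k\leq 2l_0$) and has at most $c(G,l_0)$ edges by \eqref{eq:expgrowth}. Let $F_k$ be the event that every edge of $I_k$ is closed; since each edge is open with probability at most $q$, one has $\P_{p,q}^{\Lambda}(F_k)\geq(1-q)^{c(G,l_0)}$. Following the same chain of identities as in the proof of Theorem~\ref{teo:overlap},
$$\P_{p,q}^{\Lambda}\bigl(\{W_k^{+}\leftrightarrow W_k^{-}\}\cap F_k\bigr)=\P_{p,q}^{\Lambda}\bigl(W_k^{+}\xleftrightarrow{I_k^{c}}W_k^{-}\bigr)\,\P_{p,q}^{\Lambda}(F_k)\leq\P_p(W_k^{+}\leftrightarrow W_k^{-})\,\P_{p,q}^{\Lambda}(F_k)\leq\tfrac12\,\P_{p,q}^{\Lambda}(F_k),$$
so that $\P_{p,q}^{\Lambda}(\{W_k^{+}\nleftrightarrow W_k^{-}\}\cap F_k)\geq\tfrac12(1-q)^{c(G,l_0)}$, and combining with step one yields \eqref{eq:propconeinequality}. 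The main obstacle, and the only place where the Stack Model argument genuinely differs from the Overlap one, is the geometric bookkeeping in step one: the cones $W_k^{\pm}$ are anchored at the random centre $Z(k)$, and their staircase profile $\varphi^{-1}(\cdot+L+1)$ must be broad enough at every height to absorb the boxes $B_{k\pm j}$ permitted by $A_k$, yet narrow enough for the single parameter $l_0$ to simultaneously control both the buffer cost $(1-q)^{c(G,l_0)}$ and the tail factor $e^{-2cl_0}$ in the series above.
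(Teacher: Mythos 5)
Your proposal is correct and follows essentially the same three-step argument as the paper: confine $R_k^{\pm}$ inside the cones $W_k^{\pm}$ on $A_k$, bound $\P_p(W_k^{+}\leftrightarrow W_k^{-})$ by summing over layers using \eqref{eq:expdecay}, and close a buffer containing $B_k$ to decouple from the enhanced region. The only difference is presentational: you verify the containment $R_k^{\pm}\subseteq W_k^{\pm}$ explicitly via the stacking recursion (which the paper asserts without computation), and you take the deterministic box $I_k\supseteq B_k$ as the buffer rather than $B_k$ itself, which is a harmless variant since both are contained in a box with at most $c(G,l_0)$ edges.
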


\begin{proof}
For any environment $\Lambda \in A_k$ we have $R^+_{k}\subset W^+_{k}$ and $R^-_{k}\subset W^-_{k}$ for all $k\geq 1$, so with the same flavor of \eqref{entropy}, for $\Lambda\in A_k$, we have
\begin{align}\label{connection}
\P_p(R^+_{k}\longleftrightarrow R^-_{k})&\leq \P_p(W^+_{k}\longleftrightarrow W^-_{k})\nonumber\\
&\leq  e^{-2cl_0}\sum_{m,n\geq 1}{|B_G(\varphi^{-1}(n+L))||B_G(\varphi^{-1}(m+L))|}e^{-c(m+n)}\nonumber\\
&=e^{-2cl_0}\biggl(\sum_{n\geq 1}{|B_G(\varphi^{-1}(n+L))|}e^{-cn}\biggr)^2.
\end{align} 

Now, by hypothesis  $\sum_{n\geq 1}{|B_G(\varphi^{-1}(n+L))|}e^{-cn}$ is convergent, so we can make the probability of connection as low as we want uniformly on $k$, by choosing $l_0$ large enough. In this stage, we follow the same procedure of \eqref{lifteq} in Theorem 1 with $\L\in A_k$ and
 $$F=\{\text{every edge of } B_k \text{ is closed}\}$$ to obtain
\begin{align}\label{disconnection}
\P_{p,q}^\L(R^+_{k}\nllr R^-_{k})\geq\frac12 (1-q)^{c(G,l_0)}>0
\end{align}
where $c(G,l_0)\geq 1$ is the number of edges in $B(0,2l_0)$.  
\end{proof}

The next proposition shows that the lower bound \eqref{eq:propconeinequality} in Proposition \ref{prop:coneinequality} holds under the hypothesis of Theorem \ref{teo:stack}.
\begin{prop}\label{prop:stackhyp}
    Let $X$ be a random variable such that $E\log |B_G(X)|<\infty$. Then, for all $p<p_c(G\times \ZZ)$,  there exists an increasing function $\varphi=\varphi_p$ such that $E\varphi(X)<\infty$ and 

    \begin{equation*}
      \sum_{n\geq 1}|B_G(\varphi^{-1}(n+L))|e^{-cn}<\infty
   \end{equation*}
\end{prop}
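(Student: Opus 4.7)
The plan is to take $\varphi$ proportional to $\log|B_G(\cdot)|$, with the proportionality constant tuned to the exponential-decay rate $c=c(p)$ from \eqref{eq:expdecay}. Concretely, set
\[
\varphi(x) := \tfrac{2}{c}\log|B_G(x)|, \qquad x\in\NN.
\]
The motivation is transparent: the only source of divergence in the target series is the ball growth, and the $\log$ in the hypothesis $E\log|B_G(X)|<\infty$ is precisely the gadget that converts the sub-exponential growth of $|B_G(\varphi^{-1}(n))|$ into a $e^{-cn/2}$-summable tail after cancellation with $e^{-cn}$.

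First, I would check the qualitative requirements on $\varphi$. Since $G$ is infinite and connected, $|B_G(x)|$ is non-decreasing in $x$ with $|B_G(x)|\to\infty$, so $\varphi$ is non-decreasing and $\varphi\nearrow\infty$. If strict monotonicity is needed so that $\varphi^{-1}$ behaves as in Lemma~\ref{lema:stackAk} and Definition~\ref{def:stackcones}, one may replace $\varphi$ by $\tfrac{2}{c}\log|B_G(x)|+\delta\log(1+x)$ for some small $\delta>0$; the bound $|B_G(x)|\ge x+1$, valid on any infinite connected graph, gives $\log(1+X)\le \log|B_G(X)|$, so this perturbation does not affect integrability. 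In any case, the hypothesis of Theorem~\ref{teo:stack} yields
\[
E\varphi(X) \;=\; \tfrac{2}{c}\,E\log|B_G(X)| \;<\;\infty.
\]

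Next, adopting the (generalized) inverse $\varphi^{-1}(t):=\max\{x\in\NN:\varphi(x)\le t\}$, the defining relation $\varphi(\varphi^{-1}(t))\le t$ rearranges to
\[
|B_G(\varphi^{-1}(t))| \;\le\; e^{ct/2} \qquad \text{for every }t\ge 0.
\]
Plugging this into the target sum gives
\[
\sum_{n\ge 1}|B_G(\varphi^{-1}(n+L))|\,e^{-cn} \;\le\; e^{cL/2}\sum_{n\ge 1} e^{-cn/2} \;<\;\infty,
\]
which is the desired conclusion.

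The only delicate technical point is the mild abuse in writing $\varphi^{-1}$ when $\varphi$ is only weakly increasing: this is resolved either by the max-convention used above (which is what the events in Lemma~\ref{lema:stackAk} really depend on, since the $X_n$ are $\NN$-valued) or by the strictly-increasing perturbation noted in the previous paragraph. I do not foresee any substantive mathematical obstacle beyond this bookkeeping.
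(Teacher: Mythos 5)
Your proposal is correct and follows essentially the same route as the paper: both take $\varphi$ proportional to $\log|B_G(\cdot)|$ with the constant $2/c$ (equivalently, $\alpha=c/2$ in the paper's notation), which makes $E\varphi(X)<\infty$ immediate from the hypothesis and reduces the target series to a geometric one after cancellation with $e^{-cn}$. The only difference is that you handle the possible non-invertibility of $x\mapsto|B_G(x)|$ explicitly via the generalized inverse (or a strictly increasing perturbation), yielding the one-sided bound $|B_G(\varphi^{-1}(t))|\le e^{ct/2}$, whereas the paper writes the identity $\varphi^{-1}(n)=g^{-1}(e^{\alpha n})$ without comment; your bookkeeping is a bit more careful but the argument is the same.
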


\begin{proof}
Let $\alpha=c(p)/2$  and define $g(n)= |B_G(n)|$, $\varphi(n)=\alpha^{-1}\log (g(n)).$ Then $E\varphi(X)<\infty$, $\varphi^{-1}(n)= g^{-1}(e^{\alpha n})$ and
\begin{equation*}
\sum_{n\geq 1}|B_G(\varphi^{-1}(n+L))|e^{-cn}=e^{\alpha L}\sum_{n\geq 1}e^{-(c-\alpha)n}<\infty.
\qedhere
\end{equation*}
\end{proof}

\vspace{8pt}
\begin{remark}
   There is a crucial difference on the dynamics of the models with respect to the shift transformation regarding the existence or not of the first moment of $X$, which is a key point for both proofs. In Theorem \ref{teo:stack} we cannot always rely on ergodicity in the same way as in Theorem \ref{teo:overlap}  because  the renewal process with interarrival distribution given by the Law of $X$ has no stationary measure if $EX=\infty$.
\end{remark}

\begin{proof}[Proof of Theorem~\ref{teo:stack}.]
We start by defining a sequence of explorations on the boundaries of the downward cones in order to verify the existence of infinitely many vertical blockades for the origin's cluster almost surely. Loosely speaking, we explore the cluster of all sites in the  downwards cone to see if any of them intersects the corresponding upwards cone. That is, given $\omega\in \Omega$, we choose an enumeration $(e_k)_k$ of the edges of $G\times\ZZ$ and, for every $l\in\NN$, define $\mathcal{C}_0=\mathcal{C}_0(l)= W^-_{l}$, $\D_0(l)=\emptyset$. Given a set of sites $S\in G\times \ZZ$, the {\it edge boundary} of $S$ is the set $\partial S=\{e=vw\, ;\, v\in S, w\notin  S\}. $
\begin{enumerate}
    \item Let $m_1$ be the smallest index $k$ such that $e_{k}\in \partial \mathcal{C}_0$. We set $\mathcal{D}_1=\{e_{m_1}\}$ and if the edge $e_{m_1}=v_{m_1}w_{m_1}$ is open, with $w_{m_1}\notin \cC_0$, then we set $\mathcal{C}_1=\C_0\cup\{w_{m_1}\}$, otherwise we set $\mathcal{C}_1=\C_0$.

     \item For $n\geq 2$, we let $m_n$ be the smallest index $k$ such that $e_{k}\in \partial \C_{n-1}\setminus\cD_{n-1}$. We set
     $
     \D_n=
         \D_{n-1}\cup \{e_{m_n}\}
     $ and
     $$
     \C_n=\begin{cases}
         \C_{n-1}\cup\{w_{m_n}\}, \text{if } e_{m_n}=v_{m_n}w_{m_n} \text{ is open}, \,w_{m_n}\notin \cC_{n-1}\\
         \C_{n-1}, \text{ otherwise}.
     \end{cases}
     $$

     \item  If $\C_n\cap W^+_{l}\neq \emptyset$ the exploration stops and we say that {\it the exploration failed} and we define $\C_i=\C_n$ for $i\geq n$. If the exploration does not stop, then we say that {\it the exploration succeeded}.
\end{enumerate}

Notice that by construction the exploration does not stop only in configurations where there is no crossing $\partial W_{l}^-\leftrightarrow \partial W_{l}^+$. The exploration of $\partial W^-_l$ just defined is denoted by $\mathcal{E}_l$.

Since $(X_n)_n$ is an iid sequence, it is ergodic with respect to the shift transformation, thus
$$
\nu(A_k \;{ i.o.})=1.
$$
In fact, by Birkhoff's Ergodic Theorem,
$$
\lim_{n\ra\infty}\frac1n \sum_{k=0}^{n-1}\I_{A_k}=\nu(A_0)>0
$$
$\nu$-almost surely.

In other words, the set $\mathcal{N}(\Lambda)=\{l\in \NN\,|\,A_l \text{ occurs}\}$ is infinite almost surely. Let $n_1=n_1(\Lambda)=\min\mathcal{N}$ and consider the exploration $\mathcal{E}_{n_1}$.
 Suppose the exploration failed and for $m\geq 1$ let $\mathcal{C}(W_{m}^-)=\bigcup_{i=1}^\infty \C_i(m)$.  In this case, as the exploited region is finite, we can find a translation of $W^-$ that contains it. That is, we can choose the next index $n_2=n_2(\omega,\Lambda)\in\mathcal{N}$ such that 
$$
W^-_{n_1}\cup\mathcal{C}(W_{n_1}^-)\subseteq W^-_{n_2},
$$
and perform independently the same exploration on $\partial W_{n_2}^-$.

More generally,  the exploration $\mathcal{E}_{n_k}$ of the cluster of $W^-_{n_k}$ is well defined for every $k$ and explored only a finite number of edges, where $n_k=n_k(\omega,\Lambda)$ is the smallest index such that $$W^-_{n_{k-1}}\cup\mathcal{C}(W_{n_{k-1}}^-)\subseteq W^-_{n_k},$$ and it is independent of the preceeding explorations.

Define $\mathcal{F}_k=\sigma(\mathcal{E}_{n_1},\dots,\mathcal{E}_{n_k})$, the smallest $\sigma$-álgebra that contains all the information of the first $k$ explorations and let $T^+$ denote the index of the first exploration to succeed. That is
\begin{equation}\label{eq:T+}
T^+(\Lambda,\omega)= \min\{k\geq 1; \;\mathcal{E}_{n_k} \text{ succeeded}\}.
\end{equation}
Notice that, as $\mathcal{E}_m$ is independent of $\mathcal{F}_{m-1}$, we have
\begin{align}\label{eq:conditionalexp}
   \P_{p,q}^\Lambda\left(T^+>m\right)&=\E\left[\E\left[\I_{\bigcap_{k=1}^{m}\{\mathcal{E}_{n_k} \text{ failed}\}} \bigg| \mathcal{F}_{m-1}\right]\right]\\
    &=\E\left[\I_{\bigcap_{k=1}^{m-1}\{\mathcal{E}_{n_k} \text{ failed}\}}\E\left[\I_{\{\mathcal{E}_{n_m} \text{ failed}\}} \bigg| \mathcal{F}_{m-1}\right]\right]\nonumber\\
    &=\P_{p,q}^\Lambda(\mathcal{E}\text{ failed})\P_{p,q}^\Lambda\left(T^+>m-1\right).\nonumber
\end{align}
Proceeding by induction, Proposition \ref{prop:coneinequality} and Proposition \ref{prop:stackhyp}, we have that for every $m\ge 1$,
$$
\P_{p,q}^\Lambda\left(T^+>m\right)=\prod_{k=1}^m\P_{p,q}^\Lambda(\mathcal{E}_{n_k} \text{ failed})\leq \left(\frac12 (1-q)^{c(G,l_0)}\right)^m\leq \frac{1}{2^m},
$$
hence
$$
\P_{p,q}^\Lambda\left(T^+=\infty\right)=\P_{p,q}^\Lambda\left(\bigcap_{m=1}^\infty \{T^+>m\}\right)=\lim_{m\rightarrow\infty}\P_{p,q}^\Lambda\left(T^+>m\right)\leq\lim_{m\rightarrow\infty}2^{-m}=0.
$$

Until now, we are investigating whether the origin's cluster is infinite by exploring the boundary of cones such that the initial vertex of exploration is on the positive side of the vertical axis. Analogously, one can define the mirrored explorations $\mathcal{E}_{-k}$ of cones with initial vertex of exploration on the negative side of the vertical axis, we call these {\it downward explorations}. Notice that the downward explorations are exploring the boundaries of the upward cones. Thus, we can also define the first index $T^-$ such that a downward exploration succeeds. That is, 
\begin{equation}\label{eq:T-}
    T^-(\Lambda,\omega)= \min\{k\geq 1; \;\mathcal{E}_{n_{-k}} \text{ succeeded}\}.
\end{equation}

To prove the theorem it is sufficient to show that some exploration will succeed almost surely on both upper and lower semispaces. That is, it is sufficient to show that $$\P_{p,q}^\Lambda(T^+=\infty)=\P_{p,q}^\Lambda(T^-=\infty)=0.$$

By symmetry we also have that $\P_{p,q}^\Lambda\left(T^-=\infty\right)=0$ and the proof is, thus, completed.
\end{proof}

\section{Open Questions}

We conclude this article mentioning some questions that we think would be interesting to investigate further.

\begin{enumerate}
    
\item The Stack Model results in a connected region. Alternatively, one could consider boxes with random spacing between them and set the parameter $q=1$. Our methods might be applicable in this scenario, and it would be interesting to establish conditions on the radii and spacings necessary to achieve the percolating phase.

\item The models considered here are intrinsically one-dimensional, with the reinforced region centered along a fixed line. Now, consider a graph $G$ where the simple symmetric random walk is transient. An interesting question is what occurs if the path of the random walk is reinforced or thickened by random boxes around it.

\item A natural question we are unable to tackle is what happens when $p$ is exactly the critical point of $\ZZ^2$. The simplest problem is to prove that the Zhang's result can be extended to our setting. Perhaps the current understanding of near-critical percolation on $\ZZ^2$ could be useful in addressing this issue.

\item The reinforced region in the Stack Model is never the entire graph. It would be nice to check if our result is sharp for $\Z^d$.  Does percolation occur for large $q$ whenever $\E(\log(X))=\infty$?  

\end{enumerate}

\noindent
{\bf Acknowledgments}: The authors thank Augusto Teixeira for suggesting the problem. A.N thanks CAPES, R.S was partially supported by CNPq, CAPES and by FAPEMIG, grants APQ-00868-21 and RED-00133-21, D.U. thanks FAPERJ, grant E-26/210.571/2023.

\bibliographystyle{abbrv}
\bibliography{main}

\end{document}